\theoremstyle{plain}
\newtheorem{thm}{Theorem}[section]
\newtheorem{cor}[thm]{Corollary} 
\newtheorem{lem}[thm]{Lemma}
\theoremstyle{definition}
\newtheorem{defn}{Definition}
\theoremstyle{remark}
\begin{document}

\title{Rotation groups}

\author {
Donald Silberger\footnote{DonaldSilberger@gmail.com\ \ State University of New York\ \ New Paltz, NY 12561} 
\and Sylvia Silberger\footnote{sylvia.silberger@hofstra.edu\ \ Hofstra University\ \ Hempstead, NY 11549}}

\maketitle

\centerline{\sf Remembering Jacqueline Bare Grace --- 1942 December 01 - 2016 November 22}

\begin{abstract} A query, about the orbit $P{\cal W}$ in real 3-space of a point $P$ under an isometry group ${\cal W}$ generated by edge rotations of a tetrahedron, leads to contrasting notions, ${\cal W}$ 
versus ${\cal S}$, of ``rotation group''. The set R $=\{r_{{\sf A}_1},r_{{\sf A}_2}\}$ of rotations $r_{{\sf A} _i}$ about axes ${\sf A}_i$ generates two manifestations of an isometry group on $\Re^3$: 

(1). In the {\em stationary} group ${\cal S:=S}$(R), all axes {\sf B} are fixed under a rotation $r_{\sf A}$ about {\sf A}. 

(2). In the {\em peripatetic} group ${\cal W:=W}$(R), each $r_{\sf A}$ transforms every rotational axis ${\sf B\not=A}$.  
 
{\bf Theorem.} \ If the line ${\sf A}_1$ is skew to ${\sf A}_2$, if each $r_{{\sf A}_i}$ is of infinite order, and if $P\in\Re^3$, then both of the orbits $P{\cal S}$ and $P{\cal W}$ are dense in $\Re^3$. \end{abstract}

\section{Introduction}

Four decades ago, Jan Mycielski posed this question about a regular tetrahedron {\sf T}:\vspace{.3em}

{\sf For ${\cal G}$ the isometry group on 3-space generated by the edge rotations $r_{\sf E}$ of {\sf T}, where Size$(r_{\sf E})$ is the supplement of the dihedral angle of {\sf T}, what can be said 
about the orbit, $P{\cal G}:=\{Pf:f\in{\cal G}\}$, of a point $P$ affixed to {\sf T}? }\vspace{.3em}

Mycielski's response, to our recent answer to his question, led to our study of two manifestations, ${\cal W}$ and ${\cal S}$, of the rotation group generated by a set of rotations. We establish sufficient conditions 
for the orbits $P{\cal W}$ and $P{\cal S}$ of a point $P$ to be dense in $\Re^d$ for $d\in\{2,3\}$. We have not studied the case where $d\ge4$. 


\section{Technicalities} 

$\Re:=(-\infty,\infty)$ denotes the set of all real numbers,  ${\Bbb Z}$ is the set of all integers, and ${\Bbb N}:={\mathbb Z}^+$ is the set of all positive integers. When $n\in{\Bbb N}$ then 
$[n] := \{1,2,\ldots,n\}$. Finally, ${\mathbb Q}$ is the set of all rational numbers. 

We use standard interval notation; e.g.,  $(-3,7] := \{x:x\in\Re\wedge-3<x\le7\}$. Except where more particularly specified, the dimension $d\ge2$ of the real $d$-space $\Re^d$ of our isometries is arbitrary.

For $d=2$, the rotational axes are points in $\Re^2$. However, when $d\ge3$, the axes are directed lines in $\Re^d$. 

$r_{\sf A}:\Re^d\rightarrow \Re^d$ is a rotation about the axis {\sf A}, and is a {\em directed} isometry of $\Re^d$. By this we mean that a nonzero rotation has a sign. We deem $r_{\sf A}$ positive iff we see 
it as counterclockwise. For $d\ge3$, we judge $r_{\sf A}$ to be counterclockwise if we view it as counterclockwise when we look in the direction accorded to line {\sf A}. Let $F$ be a finite set, and 
R $:=\{r_{{\sf A}_i}:i\in F\}$ a set of {\em generator rotations}, with exactly one generator $r_{{\sf A}_i}$ per axis ${\sf A}_i$. For each $i\in F$, let ${\cal G}_i$ be the cyclic 
group $\{r_{{\sf A}_i}^z: z\in{\Bbb Z}\}$ of rotations about the axis ${\sf A}_i$. 

We write Rad$(r)=\rho\pi$ to indicate the radian measure, modulo $2\pi$, of the angle through which $r$ rotates; here $\rho\in(-1,1]$ unless otherwise specified. \ Size$(r)$ denotes $|$Rad$(r)|$. We call 
an angle $\angle PVQ$ rational iff Rad$(\angle PVQ)/\pi \in {\Bbb Q}$, and we call $r_{\sf A}$ rational if it rotates through a rational angle; i.e., if $\rho\in{\Bbb Q}$. Obviously $|{\cal G}_i| < \infty$ if and 
only if $r_{{\sf A}_i}$ is rational or, equivalently, is of finite order. Finally, we call two rotations {\em equal} iff they have both the same rotational axis and also the same radian measure modulo $2\pi$.

$f:P\mapsto Pf$ presents the isometry $f$ for $P\in\Re^d$. When ${\sf M}\subseteq\Re^d$, we write $f:{\sf M}\mapsto{\sf M}f := \{Vf:V\in{\sf M}\}$.  

The binary operation of each of the two sorts of groups in this paper is its own sort of left-to-right composition of isometries; that for a ``stationary'' group is the conventional $P(f\circ g) = (Pf)g$. But, as 
we will explain later, a more peculiar relationship, $P(f\star g)=(Pf)(fg)$, holds for a ``peripatetic'' group.

The expression  $f^-$ denotes the inverse of the isometry $f$, and so $r_i^-$ is the reverse-sense rotation of $r_i$. The identity isometry on $\Re^d$ is $\iota$.\vspace{.5em}

\subsection{Stationary groups}

Let R $:=\{r_i:i\in F\}$ for $2\le |F|<\infty$, where the axis of $r_i$ is ${\sf A}_i$ for $i\in F$. The following stipulates a specializing name for that entity which most would  take to be ``the'' 
R-generated rotational isometry group.  

\begin{defn} \label{Stationary}  For $\{f,h\}\subseteq{\cal S}$ and all $P\in\Re^d$, we let $P(f\circ h) = (Pf)h := (P(f\circ h_1))g_i$ where $h=h_1\circ g_i$ with $g_i\in{\cal G}_i$ for some  $i\in F$   
and some $h_1\in{\cal S}$. The {\em stationary} group is $\langle{\cal S},\circ\rangle$ where ${\cal S := S}$(R). \end{defn}

\noindent{\bf Stationary Example.} To illustrate the compositional algorithm of the group ${\cal S:=S}$(R), we will compute the stationary product $s := g_1\circ g_2\circ g'_1\circ g_3\in {\cal S}$ of the 
rotational sequence $\langle g_1,r_2,g'_1,g_3\rangle\in{\cal E}_1\times{\cal E}_2\times{\cal E}_1\times{\cal E}_3$. For $P\in\Re^d$, we will produce $s:P\mapsto Ps$ in four steps:

\underline{(1)}. We enact $g_1:P\mapsto Pg_1$ by rotating $P$ about the axis ${\sf A}_1$ with $g_1$. 

\underline{(2)}.  $g_2:Pg_1\mapsto (Pg_1)g_2 = P(g_1\circ g_2)$ is realized by rotating the point $Pg_1$ about ${\sf A}_2$ with $g_2$. 

\underline{(3)}. We realize $g'_1:P(g_1\circ g_2)\mapsto P(g_1\circ g_2\circ g'_1)$ by rotating $P(g_1\circ g_2)$ about ${\sf A}_1$ with $g'_1$. 

\underline{(4)}. Finally, $g_3$ rotates $P(g_1\circ g_2\circ g'_1)$ about ${\sf A}_3$ to reach $P(g_1\circ g_2\circ g'_1\circ g_3) = Ps$.\vspace{.5em}

\noindent{\bf Caveat.} In the stationary context, neither rotational axes, nor points comprising them, are moved by group actions. However, a mobile point $P$ may own the same coordinate address as an 
(immobile) axis point $U$; \ i.e., ``$P=U$'' in the address-sharing sense. But whereas $U$ is unmoved by $f$, we have $f:P\mapsto Pf\ne P$. That is, $P$ and $U$ are distinct entities at the same location 
in $\Re^d$.\vspace{.5em}

We call a product $g_{j_1}\circ g_{j_2}\circ\cdots\circ g_{j_k}$ of $g_{j_i}\in{\cal E}_{j_i}$ {\em reduced} iff $j_i\ne j_{i+1}$ for every $i<k$.  

\begin{lem}\label{StatFree} Let  $f\in{\cal S}$. Then there is exactly one reduced sequence ${\bf g} :=\langle g_{j_1},g_{j_2},\ldots,g_{j_k}\rangle$ in $\bigcup\{{\cal G}_i:i\in F\}$ for which 
$f = g_{j_1}\circ g_{j_2}\circ\cdots\circ g_{j_k}$. So ${\cal S}$ is a free group on the generator set {\rm R}, modulo for those $i\in F$ with $|{\cal G}_i|<\infty$, to congruences $\mod2\pi$ which 
select the $|{\cal G}_i|$ representatives in $(-\pi,\pi]\cap{\cal G}_i$ from $\{r_i^z:z\in{\Bbb Z}\}$.  \end{lem}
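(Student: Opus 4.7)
The plan is to establish the two assertions of the lemma---existence and uniqueness of a reduced expression---separately, then read off the free-product structure as a corollary.

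\textbf{Existence.} By Definition \ref{Stationary}, every $f\in\mathcal{S}$ arises from some finite composition $h_{j_1}\circ h_{j_2}\circ\cdots\circ h_{j_m}$ with $h_{j_i}\in\mathcal{G}_{j_i}$. I would reduce such an expression by the obvious algorithm: whenever two consecutive terms share an index, $j_i=j_{i+1}$, combine them into their product in $\mathcal{G}_{j_i}$; if that product is $\iota$, delete it outright. Each step strictly shortens the expression, so the algorithm terminates at a reduced sequence representing $f$.

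\textbf{Uniqueness.} My approach is to exhibit a faithful action of $\mathcal{S}$ on the set of reduced words, which simultaneously realizes $\mathcal{S}$ as the free product $\ast_{i\in F}\mathcal{G}_i$. Let $W$ be the set of all reduced sequences (including the empty sequence, which stands for $\iota$). For each nontrivial $g\in\mathcal{G}_i$, define a map $L_g\colon W\to W$ by prepending $g$ and executing one reduction step:
\[
L_g(\langle h_{j_1},\ldots,h_{j_k}\rangle)\;=\;\begin{cases}\langle g,\,h_{j_1},\ldots,h_{j_k}\rangle & j_1\neq i,\\[2pt]\langle g\cdot h_{j_1},\,h_{j_2},\ldots,h_{j_k}\rangle & j_1=i,\ g\cdot h_{j_1}\neq\iota,\\[2pt]\langle h_{j_2},\ldots,h_{j_k}\rangle & j_1=i,\ g\cdot h_{j_1}=\iota.\end{cases}
\]
Set $L_\iota:=\mathrm{id}_W$. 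A case-by-case verification shows that $L_g\circ L_{g'}=L_{g\cdot g'}$ for all $g,g'\in\mathcal{G}_i$, so $g\mapsto L_g$ restricts on each $\mathcal{G}_i$ to a group homomorphism into $\mathrm{Sym}(W)$. Because Definition \ref{Stationary} places on $\mathcal{S}$ no relations beyond those internal to the individual $\mathcal{G}_i$, these restrictions assemble into a single homomorphism $\Phi\colon\mathcal{S}\to\mathrm{Sym}(W)$.

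Now suppose $f=g_{j_1}\circ\cdots\circ g_{j_k}$ is a reduced expression. Applying $\Phi(f)$ to the empty sequence simply re-reads off $\langle g_{j_1},\ldots,g_{j_k}\rangle$ (each $L_{g_{j_\ell}}$ lands in the first case of the definition, since consecutive indices differ). The permutation $\Phi(f)$ depends only on $f$, so its value at the empty sequence---which is the candidate reduced form---is uniquely determined by $f$. Thus $f$ has at most one reduced expression, completing uniqueness. The final statement, that $\mathcal{S}$ is a free product of the $\mathcal{G}_i$, follows by combining existence and uniqueness with the observation that the internal relations of each finite $\mathcal{G}_i$ are exactly the congruences $\bmod\,2\pi$ described in the lemma.

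\textbf{Main obstacle.} The tedious piece is the case analysis establishing $L_g\circ L_{g'}=L_{g\cdot g'}$ for $g,g'\in\mathcal{G}_i$: one must separately handle whether the argument begins with index $i$, whether the partial product $g'\cdot h_{j_1}$ cancels to $\iota$, and whether a subsequent cancellation by $g$ cascades further. Confirming that no extraneous relations sneak into $\mathcal{S}$ via the recursive scheme in Definition \ref{Stationary}---so that the $L_g$ assembly really does produce a well-defined $\Phi$ on all of $\mathcal{S}$---is the other place where care is needed; everything else is bookkeeping.
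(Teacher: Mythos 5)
Your ping--pong argument on the set $W$ of reduced words is the standard van der Waerden proof that the \emph{abstract} free product $\ast_{i\in F}\,\mathcal{G}_i$ has unique normal forms, and that part is fine as far as it goes. The gap is the step you yourself flag: ``these restrictions assemble into a single homomorphism $\Phi\colon\mathcal{S}\to\mathrm{Sym}(W)$.'' By Definition \ref{Stationary}, $\mathcal{S}$ is not a group given by a presentation; its elements are concrete isometries of $\Re^d$ and its operation is ordinary composition, so two reduced words are equal in $\mathcal{S}$ exactly when they act identically on $\Re^d$. The universal property hands you a homomorphism from the free product into $\mathrm{Sym}(W)$; to get $\Phi$ defined on $\mathcal{S}$ you must know that the canonical surjection $\ast_i\mathcal{G}_i\rightarrow\mathcal{S}$ is injective --- and that injectivity \emph{is} the uniqueness assertion of the lemma. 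Your justification, that the definition ``places on $\mathcal{S}$ no relations beyond those internal to the individual $\mathcal{G}_i$,'' is therefore circular. Nor can the circle be closed by bookkeeping, because the claim genuinely needs geometric input: for $d=2$ the commutator of two rotations about distinct points is a nontrivial translation, translations commute, and expanding the identity $[\,[r_1,r_2],\,r_1[r_1,r_2]r_1^{-}\,]=\iota$ produces a nonempty reduced word representing $\iota$. A purely formal argument would thus ``prove'' uniqueness in a situation where it fails, so any correct proof must use specific properties of the axes and angles, which your proposal never touches.

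For comparison, the paper's own proof is a short induction on word length: assuming uniqueness for shorter products, it supposes $f=h\circ g_{j_k}=h\circ g_t$ and cancels $h$ to get $g_t=g_{j_k}$. That argument only compares two expressions that already share the same length-$(k-1)$ prefix $h$, so it too never confronts the possibility of two entirely different reduced words composing to the same isometry; neither your route nor the paper's supplies the geometric fact that would be needed. So your approach is genuinely different in mechanism (normal forms via an action on words versus induction with cancellation), but both arguments stall at the same essential point.
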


\begin{proof} The lemma holds for $k=1$. Suppose for all $i<k$ that, if $h\in{\cal S}$ is a product  $h=g_{j_1}\circ g_{j_2}\circ\cdots\circ g_{j_i}$ of a reduced sequence in $\bigcup\{{\cal G}_t:t\in F\}$, then 
this factorization of $h$ is unique. Let $h := g_{j_1}\circ g_{j_2}\circ\cdots\circ g_{j_{k-1}}$, and let $f := h\circ g_{j_k}$. Pretend that $f = h\circ g_t$ for some $g_t \ne g_{j_k}$. Then, since both $f$ and $h$ 
are bijective transformations of $\Re^d$, we must  infer that $g_t = h^-\circ f = g_{j_k}$, a contradiction. The lemma follows.\end{proof}

\subsection{Peripatetic groups}

\begin{defn}\label{Peripatetic} For $g_i\in{\cal G}_i$ and  $f:\Re^d\rightarrow\Re^d$ an isometry, $fg_i$ denotes the rotation about the axis ${\sf A}_if$ with Rad$(fg_i) =$ Rad$(g_i)$. The binary operation 
$\star$ of the group ${\cal W := W}$(R) is defined recursively by $P(f\star g_i) := (Pf)(fg_i)$ for all $P\in\Re^d$, and by $\iota\star f=f=f\star\iota$. We call the group 
$\langle{\cal W}$(R)$;\star\rangle$ {\em peripatetic}.\end{defn}

${\cal W}$ is called ``peripatetic'' as a reminder that ${\sf A}_jg_i\not={\sf A}_j$ for all $\{i,j\}\subseteq F$ with $i\ne j$ unless $g_i=\iota$.\vspace{.5em} 

\noindent{\bf Peripatetic Example.} Here $\langle g_1,g_2,g'_1,g_3\rangle$ is the same four-term sequence we employed in the Stationary Example above. Again let $P\in\Re^d$. We will illustrate the peripatetic 
group's computational algorithm by showing how to obtain $w:P\mapsto Pw$ via the peripatetic group product $w := g_1\star g_2\star g'_1\star g_3$.\vspace{.3em}  

Our calculation realizing $w:P\mapsto Pw$ proceeds in the following four steps. 

\underline{(1)}.  $g_1:P\mapsto Pg_1$ swings $\Re^d$ about axis ${\sf A}_1$, while rotating the axis ${\sf A}_2$ into the position ${\sf A}_2g_1$, and ${\sf A}_3$ into the position ${\sf A}_3g_1$; \  i.e., 
when ${\cal W}$ is the group, then $g_1:{\sf A}_i\mapsto{\sf A}_ig_1$ if $i\in\{2,3\}$.  Of course ${\sf A}_1g_1={\sf A}_1$. 

\underline{(2)}. The composite rotation $g_1g_2$ moves $Pg_1$ to $(Pg_1)(g_1g_2) = P(g_1\star g_2)$, by rotating $Pg_1$ about the new axis ${\sf A}_2g_1$. Simultaneously, 
$g_1g_2:\langle {\sf A}_1g_1,{\sf A}_2g_1,{\sf A}_3g_1\rangle\mapsto\langle {\sf A}_1(g_1\star g_2),{\sf A}_2(g_1\star g_2),{\sf A}_3(g_1\star g_2)\rangle$. 

\underline{(3)}. The composite rotation $(g_1\star g_2)g_1'$ rotates the point $P(g_1\star g_2)$ about the axis ${\sf A}_1(g_1\star g_2)$, thus effecting  
$P(g_1\star g_2)\mapsto (P(g_1\star g_2))g_1' = P((g_1\star g_2)\star g_1') = P(g_1\star g_2\star g_1')$. Concomitantly  moving axes, we get  
$(g_1\star g_2)g_1':\langle {\sf A}_1(g_1\star g_2),{\sf A}_2(g_1\star g_2),{\sf A}_3(g_1\star g_2)\rangle\mapsto 
\langle {\sf A}_1(g_1\star g_2\star g_1'),{\sf A}_2(g_1\star g_2 \star g_1'),{\sf A}_3(g_1\star g_2\star g_1')\rangle$. 

\underline{(4)}.  $(g_1\star g_2\star g_1')g_3:P(g_1\star g_2\star g_1')\mapsto P(g_1\star g_2\star g_1'\star g_3)$. Thus does the group ${\cal W}$(R) produce $w:P\mapsto Pw$.\vspace{.3em} 

We ask our readers to note the difference between this $Pw$ and the $Ps$ in the stationary example.\vspace{.5em}  

The tumbling {\sf T} issue is more naturally treated by ${\cal W}$ than by ${\cal S}$, since by design ${\cal W}$ maintains the axes of rotation inside {\sf T}.  Overall, ${\cal W}$ groups may lend themselves 
more readily to navigational strategies in a spaceship than do ${\cal S}$ groups, since a rotational coordinate system rooted in ${\cal W}$(R) travels with the traveler.\vspace{.5em}

We now firm up the foundation on which rest plausible but unproven assumptions about ${\cal W}$.  

\begin{lem}\label{StarFree} Let  $f\in{\cal W}$. Then there is exactly one reduced sequence ${\bf g} :=\langle g_{j_1},g_{j_2},\ldots,g_{j_k}\rangle$ in $\bigcup\{{\cal G}_i:i\in F\}$ for which 
$f = g_{j_1}\star g_{j_2}\star\cdots\star g_{j_k}$. Therefore ${\cal W}$ is a free group on the generator set {\rm R}, subject to the congruences modulo $2\pi$ of $|{\cal G}_i|$ for those $i\in F$ with 
$|{\cal G}_i|<\infty$.   \end{lem}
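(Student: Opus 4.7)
My plan is to reduce Lemma \ref{StarFree} to Lemma \ref{StatFree} by showing that a peripatetic $\star$-product equals, as an isometry of $\Re^d$, the stationary $\circ$-product of the \emph{reversed} sequence of the same generators. Concretely, I would prove by induction on $k$ the identity
\[
g_{j_1}\star g_{j_2}\star\cdots\star g_{j_k}\ =\ g_{j_k}\circ g_{j_{k-1}}\circ\cdots\circ g_{j_1}.
\]

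The first step I need is the conjugation identity
\[
fg_i\ =\ f^-\circ g_i\circ f
\]
(equality of isometries), for every $f\in{\cal W}$ and generator $g_i$. This follows because the right-hand side sends $P$ to $((Pf^-)g_i)f$, a conjugation by $f$ of a rotation about ${\sf A}_i$; standard arguments show this is the rotation about ${\sf A}_if$ with the same radian measure, which is precisely the definition of $fg_i$ given in Definition \ref{Peripatetic}. With this in hand, the inductive step is immediate: if by hypothesis $h:=g_{j_1}\star\cdots\star g_{j_{k-1}}$ coincides as an isometry with $g_{j_{k-1}}\circ\cdots\circ g_{j_1}$, then for every $P\in\Re^d$,
\[
P(h\star g_{j_k})\ =\ (Ph)(hg_{j_k})\ =\ P\bigl(h\circ(h^-\circ g_{j_k}\circ h)\bigr)\ =\ P(g_{j_k}\circ h),
\]
which equals $P\bigl(g_{j_k}\circ g_{j_{k-1}}\circ\cdots\circ g_{j_1}\bigr)$, as required.

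With the reversal identity established, Lemma \ref{StarFree} follows from Lemma \ref{StatFree}. Suppose $f\in{\cal W}$ admits two reduced $\star$-expressions. The reversal identity shows that each of these reversed sequences is a reduced $\circ$-expression in ${\cal S}$ for the same underlying isometry $f$ (the reducedness condition $j_i\ne j_{i+1}$ is symmetric under reversal, and the congruence constraints mod $2\pi$ for finite-order generators are identical in both groups). By Lemma \ref{StatFree} the two reversed sequences agree, so the original $\star$-expressions agree too. The ``free modulo the congruences'' assertion transfers for the same reason.

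The main obstacle I foresee is the conjugation identity $fg_i=f^-\circ g_i\circ f$, and specifically the bookkeeping of the \emph{directed} axes that the paper is careful to emphasize. One needs to verify that a general $f\in{\cal W}$, being a composition of rotations, is orientation-preserving and transports a directed line ${\sf A}_i$ to a directed image line ${\sf A}_if$ in a way compatible with the sign convention for Rad, so that the conjugate rotation genuinely has Rad equal to Rad$(g_i)$ and not its negative. Once this directional bookkeeping is pinned down (straightforwardly in $\Re^3$, and with a little more care in higher $d$), the rest of the argument is purely formal.
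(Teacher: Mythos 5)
Your proposal is correct, but it takes a genuinely different route from the paper. The paper's entire proof of Lemma \ref{StarFree} is the one-line instruction to substitute $\star$ for $\circ$ throughout the proof of Lemma \ref{StatFree}, i.e.\ to rerun the same induction (pretend $f=h\star g_{j_k}=h\star g_t$, cancel $h$, conclude $g_t=g_{j_k}$) directly inside $\cal W$. You instead prove the reversal identity $g_{j_1}\star\cdots\star g_{j_k}=g_{j_k}\circ\cdots\circ g_{j_1}$ via the conjugation formula $fg_i=f^-\circ g_i\circ f$, and then transfer uniqueness from Lemma \ref{StatFree}. Your conjugation identity is the standard fact that conjugating a rotation by an orientation-preserving isometry yields the rotation about the image axis through the same signed angle, which is exactly Definition \ref{Peripatetic}'s description of $fg_i$; the induction step and the observation that reducedness and the mod-$2\pi$ congruences are preserved under reversal are both sound. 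Your approach buys more than the paper's: it shows that ${\cal W}({\rm R})$ and ${\cal S}({\rm R})$ consist of the \emph{same} isometries of $\Re^d$ and are anti-isomorphic via word reversal, which yields Corollary \ref{Isomorphic} constructively and in fact answers the paper's open request for an explicit isomorphism $\varphi:{\cal S}({\rm R})\rightarrow{\cal W}({\rm R})$ (reverse the reduced word, or equivalently compose reversal with inversion). The trade-off is that your argument inherits whatever force Lemma \ref{StatFree} actually has --- the paper's induction there only compares two factorizations sharing the same length-$(k-1)$ prefix $h$, so neither your reduction nor the paper's substitution argument addresses factorizations of different shapes; but as a reduction to a stated lemma, your proof is doing exactly what it should.
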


\begin{proof} Uniformly substituting ``$\star$'' for ``$\circ$'' in the proof of Lemma \ref{StatFree}, we obtain a proof of Lemma \ref{StarFree}.  \end{proof}

\begin{cor}\label{Isomorphic} The groups ${\cal S}${\rm(R)} and ${\cal W}${\rm (R)} are isomorphic. \end{cor}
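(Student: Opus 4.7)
The plan is to exhibit an explicit letter-by-letter bijection and check that it preserves the group operation. By Lemma \ref{StatFree}, every $f\in{\cal S}$(R) admits a unique reduced factorization $f = g_{j_1}\circ g_{j_2}\circ\cdots\circ g_{j_k}$, and by Lemma \ref{StarFree}, every $w\in{\cal W}$(R) admits a unique reduced factorization $w = g_{l_1}\star g_{l_2}\star\cdots\star g_{l_m}$, with both factorizations drawn from $\bigcup_{i\in F}{\cal G}_i$ and subject to the same congruences modulo $2\pi$ inside those $\mathcal{G}_i$ that are finite. Define $\phi:{\cal S}\to{\cal W}$ on reduced words by
\[
\phi\bigl(g_{j_1}\circ g_{j_2}\circ\cdots\circ g_{j_k}\bigr) \;:=\; g_{j_1}\star g_{j_2}\star\cdots\star g_{j_k},
\qquad \phi(\iota)=\iota.
\]
The two lemmas make $\phi$ well-defined and bijective, since the sequences of allowed reduced words in ${\cal S}$ and in ${\cal W}$ are literally the same set of sequences in $\bigcup_{i\in F}{\cal G}_i$.

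The main obstacle is the homomorphism property $\phi(f\circ h)=\phi(f)\star\phi(h)$. The subtle point is the reduction that may occur at the seam between the reduced word for $f$ and the reduced word for $h$: if the last letter of $f$ and the first letter of $h$ lie in the same $\mathcal{G}_i$, they must be combined via the group law of $\mathcal{G}_i$; if their product happens to be $\iota$, it is deleted, and the collision may cascade further into both words. The crux of the argument is that this reduction procedure depends only on the index pattern $j_k,l_1,\ldots$ and on the group law of the individual cyclic groups $\mathcal{G}_i$, never on whether adjacent letters are glued by $\circ$ or by $\star$.

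I would make this rigorous by induction on $m$, the length of the reduced word for $h$. For $m=1$, with $h=g_i\in\mathcal{G}_i$, there are two subcases. If $j_k\neq i$, the concatenation is already reduced on both sides, and Definition \ref{Peripatetic} together with Definition \ref{Stationary} gives $\phi(f\circ g_i)=\phi(f)\star g_i=\phi(f)\star\phi(g_i)$ directly. If $j_k=i$, the last letter $g_{j_k}$ of $\phi(f)$ is combined with $g_i$ inside $\mathcal{G}_i$ to form a single new letter (or is cancelled, possibly triggering a further combination with $g_{j_{k-1}}$); exactly the same rewriting on the stationary side produces the unique reduced form of $f\circ g_i$, so the two results have identical reduced words and Lemma \ref{StarFree} gives equality in $\mathcal{W}$. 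The inductive step follows by associativity of $\star$ (and of $\circ$): if $h=h'\star g_i$ in reduced form with $h'$ shorter, then
\[
\phi(f\circ h)=\phi\bigl((f\circ h')\circ g_i\bigr)=\phi(f\circ h')\star g_i=\bigl(\phi(f)\star\phi(h')\bigr)\star g_i=\phi(f)\star\phi(h),
\]
using the inductive hypothesis twice. Hence $\phi$ is a bijective homomorphism, i.e., an isomorphism.
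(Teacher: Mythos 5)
Your proposal is correct and follows essentially the same route as the paper: the paper declares the corollary ``immediate'' from Lemmas \ref{StatFree} and \ref{StarFree}, i.e., from the fact that both groups have the same unique reduced-word normal form over $\bigcup_{i\in F}{\cal G}_i$. You simply make explicit the word-for-word bijection and the seam-reduction argument for the homomorphism property that the paper leaves unstated.
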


\begin{proof} This corollary is immediate from the two Lemmas, \ref{StatFree} and \ref{StarFree}.  \end{proof}

Although as abstract algebraic structures the groups ${\cal S}$(R) and ${\cal W}$(R) are indistinguishable, they differ as specific subgroups of the group of all isometries on $\Re^d$. Given $P\in\Re^d$ and R, 
we ask our readers for an efficient algorithm that expresses some isomorphism $\varphi:{\cal S}$(R)$\rightarrow{\cal W}$(R). 

For $P\in\Re^d$ what relationships obtain between the sets $P{\cal S}$ and $P{\cal W}$?

\section{The tumbling tetrahedron and peripatetic orbital densities}

When $x\in\Re$ then the expression $x{\Bbb Z}/{\Bbb Z}$ denotes the set $[0,1)\cap\{xa-b:\{a,b\}\subseteq{\Bbb Z}\}$. 

Our applications of ${\cal W}$(R) to density issues depend upon the following well-known consequence\footnote{pointed out to us by Wies{\l}aw Dziobiak} of a theorem of Kronecker, \cite{apostol}. A theorem of 
Hurwitz, \cite{hardy} \cite{hurwitz} \cite{leveque} \cite{niven1}, also provides a proof.\footnote{We will email a PDF of three very short proofs of this fact to those who request us to do so.} 

\begin{lem}\label{Tool} Let $x\in\Re\setminus{\mathbb Q}$. Then the set $x{\mathbb Z}/{\mathbb Z}$ is dense in $[0,1)$.  \end{lem}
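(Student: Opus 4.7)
The plan is to give the classical Dirichlet/Kronecker pigeonhole argument: first extract an integer multiple $mx$ whose fractional part is arbitrarily small, then step through $[0,1)$ with these small increments and show that every subinterval is hit.

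First, I would unpack the definition. For any $a\in{\Bbb Z}$, the unique $b\in{\Bbb Z}$ with $xa-b\in[0,1)$ is $b=\lfloor xa\rfloor$, so $xa-b$ is the fractional part $\{xa\}$. Hence $x{\Bbb Z}/{\Bbb Z}=\{\{xa\}:a\in{\Bbb Z}\}$, and density in $[0,1)$ is equivalent to showing that, for every $t\in[0,1)$ and every $\varepsilon>0$, there exists $a\in{\Bbb Z}$ with $|\{xa\}-t|<\varepsilon$.

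Next, I would fix $N\in{\Bbb N}$ with $1/N<\varepsilon$ and apply pigeonhole to the $N+1$ points $\{x\},\{2x\},\ldots,\{(N+1)x\}$ in $[0,1)$, partitioned into $N$ subintervals of length $1/N$. Two of them, say $\{ix\}$ and $\{jx\}$ with $1\le i<j\le N+1$, lie in the same subinterval. Setting $m:=j-i$, the quantity $\{mx\}$ differs from $\{jx\}-\{ix\}$ by an integer and has absolute value less than $1/N$; because $x\notin{\Bbb Q}$, it is nonzero. Thus either $\{mx\}\in(0,1/N)$ or $\{mx\}\in(1-1/N,1)$. In the second case, $\{-mx\}=1-\{mx\}\in(0,1/N)$, so after replacing $m$ by $-m$ if necessary I can assume $\delta:=\{mx\}\in(0,1/N)\subseteq(0,\varepsilon)$, while keeping $m\in{\Bbb Z}\setminus\{0\}$.

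Finally, I would walk through $[0,1)$ with step $\delta$. The sequence $\{kmx\}$ for $k=0,1,2,\ldots$ either equals $k\delta\bmod 1$, and as $k$ increases these fractional parts advance by the constant increment $\delta<\varepsilon$ (with occasional wrap-around at $1$). Hence for every target $t\in[0,1)$ some $k$ yields $|\{kmx\}-t|<\delta<\varepsilon$. Taking $a:=km\in{\Bbb Z}$ produces the desired element of $x{\Bbb Z}/{\Bbb Z}$, completing the proof.

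There is no real obstacle here; the only points requiring care are (i) invoking the irrationality of $x$ to ensure $\{mx\}\neq 0$, so that the pigeonhole gap is genuinely positive, and (ii) allowing negative $a$, which is available since the set is defined over all of ${\Bbb Z}$, to turn a fractional part close to $1$ into one close to $0$.
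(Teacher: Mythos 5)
Your proof is correct, but it follows a different route from the paper, which offers no argument of its own for Lemma \ref{Tool}: the authors simply cite it as a known consequence of a theorem of Kronecker (via Apostol) and note that Hurwitz's approximation theorem also yields it. You instead give the self-contained Dirichlet pigeonhole argument: unpack $x{\mathbb Z}/{\mathbb Z}$ as the set of fractional parts $\{xa\}$, extract via pigeonhole a nonzero $m$ with $\delta:=\{mx\}\in(0,1/N)$ (correctly using irrationality to rule out $\{mx\}=0$, and passing to $-m$ when the fractional part is near $1$), and then step through $[0,1)$ in increments of $\delta<\varepsilon$. All the steps check out; in particular $\{kmx\}=\{k\delta\}$ because $kmx-k\delta=k\lfloor mx\rfloor\in{\mathbb Z}$, and taking $k=\lfloor t/\delta\rfloor$ lands within $\delta$ of any target $t$ without even needing the wrap-around you mention. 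The trade-off is the usual one: the paper's citation is shorter and situates the lemma among stronger equidistribution and approximation results, while your argument is elementary, complete, and makes the paper self-contained at the one point where its density theorems actually rest on number theory. The only blemish is the stray word ``either'' in your final paragraph, which is a typo rather than a gap.
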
 

We need a corollary which is immediate from Lemma \ref{Tool}:

\begin{cor}\label{Circle} Let $P$ be a point on a circle {\sf C} with centerpoint $U$, and let $r$ be a rotation about $U$. Then these three assertions are equivalent: 

{\bf 1.} \ The rotation $r$ is of infinite order.

{\bf 2.}  \ The size of $r$ is an irrational multiple of $\pi$ radians.

{\bf 3.}  \ The set $\{Pr^z:z\in{\mathbb Z}\}$ is dense in {\sf C}.\end{cor}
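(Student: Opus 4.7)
The plan is to establish the cycle of implications $(1)\Rightarrow(2)\Rightarrow(3)\Rightarrow(1)$, with Lemma~\ref{Tool} doing essentially all of the real work. The equivalence $(1)\Leftrightarrow(2)$ is almost built into the definitions in the Technicalities section, but for completeness I would first prove $(1)\Rightarrow(2)$ by contraposition: if $\mathrm{Rad}(r)=\rho\pi$ with $\rho\in\mathbb{Q}$, write $\rho=p/q$ in lowest terms; then $\mathrm{Rad}(r^{2q})=2p\pi\equiv 0\pmod{2\pi}$, so $r^{2q}=\iota$ and $r$ is of finite order.

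The substance is $(2)\Rightarrow(3)$. I would identify $\mathsf{C}$ with $\mathbb{R}/\mathbb{Z}$ via the homeomorphism that sends the point reached from $P$ by a counterclockwise rotation about $U$ through angle $\theta$ to the coset of $\theta/(2\pi)$. Under this identification, $P$ corresponds to $0$, and the orbit element $Pr^z$ corresponds to $z\mathrm{Rad}(r)/(2\pi)=z(\rho/2)\bmod 1$, so
\[
\{Pr^z:z\in\mathbb{Z}\}\longleftrightarrow (\rho/2)\mathbb{Z}/\mathbb{Z}.
\]
Since $\rho$ is irrational, so is $\rho/2$, and Lemma~\ref{Tool} asserts that $(\rho/2)\mathbb{Z}/\mathbb{Z}$ is dense in $[0,1)$. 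Because the identification $\mathsf{C}\leftrightarrow\mathbb{R}/\mathbb{Z}$ is a homeomorphism, density transfers back to $\mathsf{C}$, giving (3).

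Finally $(3)\Rightarrow(1)$ by contraposition: if $r$ has finite order $n$, then $\{Pr^z:z\in\mathbb{Z}\}=\{Pr^z:0\le z<n\}$ has at most $n$ points and cannot be dense in the (uncountable) circle $\mathsf{C}$. I do not anticipate a substantive obstacle; the only care required is in fixing a convention that makes the angular parameterization of $\mathsf{C}$ a homeomorphism onto $\mathbb{R}/\mathbb{Z}$, so that topological density transfers correctly between the two models.
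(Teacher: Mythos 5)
Your proposal is correct and matches the paper's intent exactly: the paper offers no written proof, declaring the corollary ``immediate from Lemma~\ref{Tool},'' and your argument is precisely the routine spelling-out of that immediacy --- parameterizing {\sf C} by $\Re/{\Bbb Z}$ so that the orbit becomes $(\rho/2){\Bbb Z}/{\Bbb Z}$, then invoking Lemma~\ref{Tool}, with the finite-order equivalences handled by elementary contraposition. No substantive difference from the paper's approach.
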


The generating sets R := $\{r_{{\sf A}_1},r_{{\sf A}_2}\}$ we will be using are two-membered, except when we deal with the tumbling tetrahedron {\sf T}, for which $|$R$_{\sf T}|=6$. Under the action of 
${\cal W}$, a copy of $\Re^d$ moves against a fixed $\Re^d$ background, whose points are not budged by ${\cal W}$ actions.

Recall that in both the stationary and the peripatetic contexts, ${\cal G}_i$ is the cyclic subgroup $\{r_i^z:i\in{\Bbb Z}\}$.

We now state and prove our more easily visualized peripatetic rotational density theorem.\footnote{Allan Silberger suggested Theorem \ref{AllanS}. } 

\begin{thm}\label{AllanS} Let $U_1\not=U_2$ be points in $\Re^2$. Let $r_1$ and $r_2$ be rotations of infinite order about $U_1$ and $U_2$ respectively. Let ${\cal W:=W}${\rm(R)}, 
where {\rm R} $:= \{r_1,r_2\}$. Let $P\in\Re^2$. Then the orbit $P{\cal W}$ is dense in $\Re^2$.    \end{thm}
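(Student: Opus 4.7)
The plan is to exhibit inside $\mathcal{W}$ a dense subgroup of translations of $\mathbb{R}^2$; density of the orbit $P\mathcal{W}$ then follows immediately. I first reduce to the stationary setting. For any $h \in \mathcal{W}$ and any rotation $g$ about an axis $U$, the rotation $hg$ (about $Uh$, with $\mathrm{Rad}(hg) = \mathrm{Rad}(g)$) coincides with the conjugate $h^{-1}\circ g\circ h$: this conjugate fixes $Uh$ pointwise, and, since $\mathcal{W}$ consists of orientation-preserving isometries, it preserves the rotation angle. Therefore
\[
h \star g \;=\; h\circ(hg) \;=\; h\circ h^{-1}\circ g\circ h \;=\; g\circ h,
\]
and by induction every $\star$-product equals the reverse $\circ$-product. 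Thus $\mathcal{W} = \mathcal{S}$ as subgroups of $\mathrm{Iso}(\mathbb{R}^2)$, so $P\mathcal{W} = P\mathcal{S}$ and it suffices to show $P\mathcal{S}$ is dense.

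Next I produce a nonzero translation and its rotates. The commutator $c := r_1\circ r_2\circ r_1^{-1}\circ r_2^{-1} \in \mathcal{S}$ has total rotation angle $0$, so it is a translation of $\mathbb{R}^2$. Identifying $\mathbb{R}^2$ with $\mathbb{C}$ (so each $U_j \in \mathbb{C}$) and setting $\alpha_j := e^{i\,\mathrm{Rad}(r_j)}$ so that $r_j\colon z \mapsto \alpha_j(z-U_j)+U_j$, a direct computation gives $c$ as translation by
\[
v_0 \;=\; (1-\bar\alpha_1)(1-\bar\alpha_2)(U_2-U_1),
\]
which is nonzero because $U_1\neq U_2$ and each $\alpha_j\neq 1$ (the $r_j$ have infinite order). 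For every $n\in\mathbb{Z}$, $r_1^n \circ c \circ r_1^{-n} \in \mathcal{S}$ is translation by $\alpha_1^n v_0$, and Corollary~\ref{Circle} applied to the rotation $v \mapsto \alpha_1 v$ about the origin (starting from $v_0$) shows that $\{\alpha_1^n v_0 : n\in\mathbb{Z}\}$ is dense in the circle $\Gamma$ of radius $|v_0|$ about $0$.

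Let $T \subseteq \mathcal{S}$ be the subgroup of translations, identified with a subgroup of $(\mathbb{R}^2,+)$. Then $\overline{T}$ contains $\Gamma$. The crux is to deduce $\overline{T} = \mathbb{R}^2$. By the standard classification of closed subgroups of $\mathbb{R}^2$, any proper closed subgroup is contained in some $\mathbb{R} u + \mathbb{Z} w$, a countable union of parallel lines; each such line meets $\Gamma$ in at most two points, so the uncountable $\Gamma$ cannot lie in any such union. Hence $\overline{T} = \mathbb{R}^2$, i.e.\ $T$ is dense, and then $P\mathcal{S} \supseteq \{P + t : t \in T\}$ is dense in $\mathbb{R}^2$, which completes the proof.

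The main obstacle is this last density step, upgrading ``dense on one circle'' to ``dense in $\mathbb{R}^2$'' via the subgroup structure. One can invoke the classification of closed subgroups as above, or argue concretely by taking differences $\alpha_1^n v_0 - \alpha_1^m v_0 \in T$ (with $n,m$ chosen via Kronecker so that $\alpha_1^n\approx\alpha_1^m$) to manufacture arbitrarily short translations in every direction, and then approximate arbitrary targets by integer multiples of these short translations.
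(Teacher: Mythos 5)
Your proof is correct, but it takes a genuinely different route from the paper's. The paper argues by contradiction: it posits a largest open disc ${\sf D}$ centered at some $Q$ with ${\sf D}\cap P{\cal W}=\emptyset$, places an orbit point $Pf$ on the boundary circle, and then uses one more generator to swing a circle through $Pf$ into ${\sf D}$, invoking Corollary \ref{Circle}; the whole argument stays inside the peripatetic formalism. You instead begin with the identity $h\star g=g\circ h$ (valid because $hg$ is precisely the conjugate $h^-\circ g\circ h$ for orientation-preserving $h$), which shows ${\cal W}({\rm R})$ and ${\cal S}({\rm R})$ coincide \emph{as sets of isometries of} $\Re^2$ --- incidentally answering the question the paper poses after Corollary \ref{Isomorphic} about the relationship between $P{\cal S}$ and $P{\cal W}$, and making Theorem \ref{StaDense} for $d=2$ an immediate consequence of this one. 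You then run the classical argument: the commutator of two nontrivial rotations about distinct centers is a nonzero translation (your formula for $v_0$ checks out), its conjugates by $r_1^n$ give translations dense on a circle by Corollary \ref{Circle}, and a closed subgroup of $(\Re^2,+)$ containing a circle of positive radius must be all of $\Re^2$. Your approach is more algebraic and, frankly, tighter at the joints than the paper's (which leans on an unspecified ``prudently chosen'' $g_1$ and on an orbit point actually landing on the boundary circle); it also yields strictly more, namely a dense group of translations inside ${\cal W}$. What the paper's geometric argument buys in exchange is direct transferability to the three-dimensional skew-axis setting of Theorem \ref{Main}, where the commutator of rotations about skew lines is no longer a translation and your construction would need substantial reworking.
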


\begin{proof} Take it that $P\ne U_1$. We argue by contradiction. Pretend there is a (fixed) point $Q$, and a (fixed) disc ${\sf D}\subseteq\Re^2$ with center at $Q$, and whose radius $\epsilon>0$ is the 
largest that allows ${\sf D}\cap P{\cal W}=\emptyset$. Since $\epsilon\le\|P-Q\|<\infty$, we have $Pf\in{\sf S}:=\overline{\sf D}\setminus{\sf D}$ for some $f\in{\cal W}$, where $\overline{\sf D}$ is the 
closure of {\sf D}, and {\sf S} is a circle. 

The points $Pf,\, (U_2f)(fg_1) =: U_2(f\star g_1)$ and $Q$ are noncollinear for some $g_1\in{\cal G}_1$. Let ${\sf C}_1\subseteq\Re^2$ be the circle with centerpoint $U_1$ and with $P\in{\sf C}_1$; by 
hypothesis, the radius of ${\sf C}_1$ is $\|P-U_1\|>0$. Let ${\sf C}_2$ be the circle with centerpoint $U_2$ and with $P\in{\sf C}_2$; the radius of ${\sf C}_2$ is $\|P-U_2\|\ge 0$. 

The circles ${\sf C}_1$ and ${\sf C}_2$ are mapped by $f$ isometrically onto the respective circles ${\sf C}_1f$ with centerpoint $U_1f$, and ${\sf C}_2f$ with centerpoint $U_2f$. We see that 
${\sf C}_1f = {\sf C}_1(f\star g_1)$, since $f\star g_1$ merely rotates ${\sf C}_1f$ about the point $U_1f$. However, ${\sf C}_2(f\star g_1)\not={\sf C}_2f$; for, the rotation $f\star g_1$ swings $\Re^2$ 
around $U_1f$, thus mapping the circle ${\sf C}_2f$ isometrically onto the circle ${\sf C}_2(f\star g_1)$, whose centerpoint is $U_2fg_1\not=U_2f$. If $g_1$ is chosen prudently, then 
${\sf C}_2(f\star g_1)\cap{\sf B}\not=\emptyset$. Now \ref{Circle} finishes the proof, since $P(f\star g_1)=(Pf)(fg_1)$ and $f\star g_1\in{\cal W}$. \end{proof}

The analogous next result will enable us to answer the Mycielski question which inspired this paper.

\begin{thm}\label{Main} Let $r_1$ and $r_2$ be infinite-order rotations about the respective skew directed lines ${\sf X}_1$ and ${\sf X}_2$ in $\Re^3$, let {\rm R} $:=\{r_1,r_2\}$, let ${\cal W:=W}${\rm (R)}, and 
let $P\in\Re^3$. The orbit $P{\cal W} := \{Pf:f\in{\cal W}\}$ is dense in $\Re^3$.  \end{thm}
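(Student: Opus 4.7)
My plan is to argue by contradiction, following the template of the proof of Theorem~\ref{AllanS}. Suppose $P{\cal W}$ is not dense in $\Re^3$. Then there exist a point $Q\in\Re^3$ and a largest $\epsilon>0$ such that the open ball ${\sf B}$ of radius $\epsilon$ about $Q$ satisfies ${\sf B}\cap P{\cal W}=\emptyset$. The maximality of $\epsilon$ guarantees some $f\in{\cal W}$ for which $Pf$ lies on the boundary sphere ${\sf S}=\overline{\sf B}\setminus{\sf B}$, and the moved axes ${\sf X}_1f$ and ${\sf X}_2f$ remain skew since $f$ is an isometry.

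First I would apply Corollary~\ref{Circle} to the rotation $fr_1$ about ${\sf X}_1f$: the orbit points $P(f\star r_1^z)$ are dense in the circle ${\sf C}_1'$ through $Pf$ perpendicular to ${\sf X}_1f$. If ${\sf C}_1'\cap{\sf B}\not=\emptyset$ we are done, so assume that ${\sf C}_1'$ is tangent to ${\sf S}$ at $Pf$. Next I would perform a second peripatetic step: for each $z_1\in{\Bbb Z}$, iterating the rotation $(f\star r_1^{z_1})r_2$ about the moved axis ${\sf X}_2(f\star r_1^{z_1})$ produces, by Corollary~\ref{Circle}, orbit points dense in a circle ${\sf C}_2^{(z_1)}$ of radius $\|P-{\sf X}_2\|$ through $P(f\star r_1^{z_1})$. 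As $z_1$ varies, the circles ${\sf C}_2^{(z_1)}$ sweep a dense subset of the surface of revolution $\Sigma$ obtained by rotating ${\sf C}_2^{(0)}$ about ${\sf X}_1f$. If $\Sigma$ meets ${\sf B}$, then by density some particular ${\sf C}_2^{(z_1)}$ meets ${\sf B}$, and a final invocation of Corollary~\ref{Circle} places an orbit point of $P{\cal W}$ inside ${\sf B}$---the desired contradiction.

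The main obstacle is the degenerate case in which $\Sigma$ also fails to enter ${\sf B}$, forcing $\Sigma$ to be tangent to ${\sf S}$ at $Pf$. A first-order calculation shows this requires both the tangent to ${\sf C}_1'$ and the tangent to ${\sf C}_2^{(0)}$ at $Pf$ to be orthogonal to $Pf-Q$; equivalently, $Q$ lies simultaneously in the plane $\Pi_1$ spanned by ${\sf X}_1f$ and $Pf$ and in the plane $\Pi_2$ spanned by ${\sf X}_2f$ and $Pf$. To break this I would add a third peripatetic step, considering the three-parameter family $P(f\star r_1^{z_1}\star r_2^{z_2}\star r_1^{z_3})$. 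Because ${\sf X}_1f$ and ${\sf X}_2f$ are skew, for any nonzero $z_2$ the axis ${\sf X}_1(f\star r_1^{z_1}\star r_2^{z_2})$ differs from ${\sf X}_1f$, so the third-step tangent at that perturbed base point is a genuinely new direction; equivalently, the second-order terms of the Taylor expansion of position in $(z_1,z_2,z_3)$ contribute a component transverse to the tangent plane of ${\sf S}$ at $Pf$. Hence the three-parameter orbit locally occupies a three-dimensional region that meets ${\sf B}$, and the density afforded by Corollary~\ref{Circle} in each coordinate completes the contradiction.
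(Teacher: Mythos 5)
Your overall strategy is the same as the paper's: argue by contradiction with a maximal empty open ball ${\sf B}$ centered at $Q$, locate $Pf$ on the bounding sphere ${\sf S}$, and repeatedly apply Corollary~\ref{Circle} to circles about the transported axes. Your first two steps essentially reproduce the paper's proof, which reduces to the case where ${\sf C}_1f$ is tangent to ${\sf S}$ at $Pf$ and then invokes Corollary~\ref{Circle} for the rotation about ${\sf X}_2f$. Where you go further is in noticing that the circle about ${\sf X}_2f$ through $Pf$ --- and indeed your whole surface of revolution $\Sigma$ --- might \emph{also} fail to meet ${\sf B}$, precisely when $Q$ lies on the line $\Pi_1\cap\Pi_2$; that degenerate configuration is real (skewness of ${\sf X}_1f$ and ${\sf X}_2f$ only guarantees $\Pi_1\ne\Pi_2$, not that $Q$ avoids their common line), and the paper's proof passes over it in silence. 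So your diagnosis of where the difficulty sits is sharper than the paper's. (You should also dispose of the small side case $P\in{\sf X}_2$, which the paper treats separately.)

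The trouble is that your resolution of the degenerate case is exactly where the argument stops being a proof. The claim that ``the second-order terms of the Taylor expansion of position in $(z_1,z_2,z_3)$ contribute a component transverse to the tangent plane of ${\sf S}$ at $Pf$'' is not equivalent to, and does not follow from, the observation that ${\sf X}_1(f\star r_1^{z_1}\star r_2^{z_2})\ne{\sf X}_1f$ for $z_2\ne0$. At the base point the three first-order velocity vectors are $t_1$, $t_2$, and again $t_1$ (the third parameter rotates about ${\sf X}_1f$ itself when $z_1=z_2=0$), so the parametrized family has rank at most $2$ there and everything rests on an honest second-order computation that you do not perform: you must show that the quadratic part of the displacement points \emph{into} ${\sf B}$ (toward $Q$) rather than merely off the tangent plane on the outward side, and you must then convert ``locally occupies a three-dimensional region'' into ``contains a point strictly inside ${\sf B}$,'' since open pieces of the orbit closure accumulating at the boundary point $Pf$ could a priori all lie outside $\overline{\sf B}$. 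Until that computation is supplied, the decisive step of your proof --- and, for what it is worth, the step the paper itself never confronts --- remains an unproved assertion.
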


\begin{proof} We can take it that $P\notin{\sf X}_1$. For each $i\in\{1,2\}$ with $P\notin{\sf X}_i$, let $U_i\in{\sf X}_i$ be such that $PU_i\perp{\sf X}_i$, and let ${\sf C}_i$ be the circle in $\Re^3$ of radius 
$\|P-U_i\|$ and with centerpoint $U_i$. If $P\notin{\sf X}_i$, then $\{U_i,P,Pg_i\}$ is a set of noncollinear points for $\iota \ne g_1\in{\cal G}_1$. But in the event that $P\in{\sf X}_i$, let $U_i:=P$ and 
${\sf C}_i := \{P\}$. 

Again arguing by contradiction, we pretend that there is a (fixed) point $Q$, and a $Q$-centered open ball ${\sf B}\subseteq\Re^3$ of radius $\epsilon>0$, such that ${\sf B}\cap P{\cal W}=\emptyset$, 
but that if ${\sf B'}$ is a $Q$-centered ball with radius $\epsilon'>\epsilon$ then ${\sf B'}\cap P{\cal W}\not=\emptyset$. Since $\epsilon \le \|Q-P\| <\infty$, it follows that 
$Pf\in{\sf S}:=\overline{\sf B}\setminus{\sf B}$ for some $f\in{\cal W}$,  where $\overline{\sf B}$ is the closure of {\sf B} and where {\sf S} is therefore a 2-sphere. 

Since {\sf B} is open, if ${\sf B\cap C}_1f\not=\emptyset$ then ${\sf B\cap C}_1f$ is an arc of positive length in {\sf B}, whence Corollary \ref{Circle} concludes our proof. So we take the circle ${\sf C}_1f$ to be 
tangent to the sphere {\sf S} at the point $Pf$. For $i\in\{1,2\}$, the point $U_if\in{\sf X}_if$ is the centerpoint of the circle ${\sf C}_if$.

Since the axes ${\sf X}_1$ and ${\sf X}_2$ are skew and since $f$ is an isometry, ${\sf X}_1f$ is skew to ${\sf X}_2f$. Furthermore, Order$(r_2) = \infty$ by hypothesis. WeThere are two cases. 

\underline{Case}: $P\notin{\sf X}_2$. Then we can infer from Corollary \ref{Circle} that there exists $g_2\in{\cal G}_2$ with $P(f\star g_2) := (Pf)(fg_2)\in{\sf B}$. Moreover, $f\star g_2 \in {\cal W}$. Thus 
${\sf B}\cap P{\cal W} \ne \emptyset$, contrary to our choice of ${\sf B}$. 

\underline{Case}: $P=U_2$. By Corollary \ref{Circle} there exists $g_2\in{\cal G}_2$ with ${\sf B}\cap{\sf C}_1(f\star g_2) \ne \emptyset$. So here too ${\sf B}\cap P{\cal W} \ne \emptyset$.

\noindent In both cases we reach a contradiction. \end{proof}

\noindent{\bf Remark.} If the hypotheses of Theorem \ref{Main} allowed ${\sf X}_1\cap{\sf X}_2=\{V\}$, then $P{\cal W}\subseteq{\sf Y}$ where ${\sf Y}\subseteq\Re^3$ is the sphere of radius $\|P-V\|$ and 
centerpoint $V$; if also $P\not= V$ then $\|P-V\|>0$, and $P{\cal W}$ would be dense in {\sf Y}.\vspace{.7em} 

In order that assure that $P{\cal W}({\rm R}_{\sf T})$ is dense in $\Re^3$, where \ R$_{\sf T}$ \ is the set of six edge rotations $r_{\sf E}$ of {\sf T}, Theorem \ref{Main} requires only two of the six to be irrational.
But  Size$(r_{\sf E}) := \pi-\theta$ for each $r_{\sf E}\in {\rm R}_{\sf T}$, where $\theta$ is the size of each dihedral angle of {\sf T}. So we will need to prove that $\theta$ is irrational.\vspace{.5em}

\noindent{\bf Quiz.} Show that $\sin(\theta)=2\sqrt{2}/3$, and that equivalently $\cos{\theta} = 1/3$.

\begin{lem}\label{Niven} Let $0<\theta<\pi/2$. If $\cos(\theta)=1/3$ then $\theta$ is irrational. \end{lem}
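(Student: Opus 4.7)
\smallskip

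\noindent\textbf{Proof proposal.}  The plan is a standard Niven-style argument: assume for contradiction that $\theta = (p/q)\pi$ with $p,q\in{\Bbb Z}$ and $q\ge 1$, and derive a contradiction by showing that $2\cos(q\theta)$ cannot simultaneously equal $\pm 2$ and have the arithmetic form forced on it by $\cos\theta = 1/3$.

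First I would set $a_n := 2\cos(n\theta)$ for $n\ge 0$, note that $a_0 = 2$ and $a_1 = 2\cos\theta = 2/3$, and invoke the product-to-sum identity $2\cos(n\theta)\cos\theta = \cos((n+1)\theta) + \cos((n-1)\theta)$ to obtain the recurrence
\begin{equation*}
a_{n+1} \;=\; \tfrac{2}{3}\, a_n \;-\; a_{n-1}, \qquad n\ge 1.
\end{equation*}
Writing $a_n = b_n/3^n$ and clearing denominators turns this into the integer recurrence $b_{n+1} = 2b_n - 9\, b_{n-1}$ with $b_0 = 2$ and $b_1 = 2$.

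The key claim is then the following arithmetic statement, proved by induction on $n$: for every $n\ge 1$ the integer $b_n$ satisfies $3\nmid b_n$. The base case $n=1$ is clear since $b_1 = 2$. For the inductive step, reduce the recurrence modulo $3$: since $9b_{n-1}\equiv 0\pmod 3$, one has $b_{n+1}\equiv 2b_n\pmod 3$, and by the inductive hypothesis $b_n\not\equiv 0$, so $b_{n+1}\not\equiv 0$ either. Consequently $a_n = b_n/3^n$ is, for every $n\ge 1$, a rational number whose lowest-terms denominator is exactly $3^n$, and in particular $a_n$ is never an integer for $n\ge 1$.

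Finally, if $\theta/\pi = p/q$ with $q\ge 1$, then $a_q = 2\cos(p\pi) = \pm 2\in{\Bbb Z}$, which contradicts the claim just established. Hence $\theta/\pi \notin {\Bbb Q}$, i.e.\ $\theta$ is irrational in the sense of the paper. The only real obstacle is the 3-adic induction above; once the recurrence is written down, everything else is bookkeeping, and the hypothesis $0<\theta<\pi/2$ is used only to guarantee that $\theta$ is a genuine angle so that the identification $\theta=(p/q)\pi$ is meaningful and $q\ge 1$.
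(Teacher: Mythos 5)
Your proof is correct, but it takes a genuinely different route from the paper. The paper disposes of the lemma in one line by citing Corollary 3.12 of Niven's \emph{Irrational Numbers} (the classification result that an angle in $(0,\pi/2)$ and its cosine are both rational only for $\cos(\pi/3)=1/2$), and simply observes that $1/3\ne 1/2$. You instead re-derive the relevant special case from scratch: the Chebyshev-type recurrence $a_{n+1}=\tfrac{2}{3}a_n-a_{n-1}$ for $a_n=2\cos(n\theta)$, the integer form $b_{n+1}=2b_n-9b_{n-1}$ after the substitution $a_n=b_n/3^n$, and the $3$-adic induction showing $3\nmid b_n$, so that $a_q$ can never be the integer $\pm 2$ that $\theta=(p/q)\pi$ would force. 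All the steps check out: the product-to-sum identity gives the recurrence, the initial values $b_0=b_1=2$ are right, and reducing mod $3$ kills the $9b_{n-1}$ term so the induction closes. What your approach buys is self-containedness --- the lemma no longer leans on an external monograph, and the argument is elementary enough to verify line by line; what the paper's approach buys is brevity and the full strength of Niven's classification (which would also handle any other non-$\{0,\pm\tfrac12,\pm1\}$ rational value of the cosine without a new denominator-growth computation). One small remark: the hypothesis $0<\theta<\pi/2$ is not really load-bearing in your argument either --- $\cos\theta=1/3$ together with $\theta=(p/q)\pi$, $q\ge 1$, already yields the contradiction --- so your closing sentence slightly overstates its role, but this is cosmetic rather than a gap.
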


\begin{proof} By Corollary 3.12 of \cite{niven1}, both the angle $\pi/2-\phi$ and the real number $\cos(\phi)=\sin(\pi/2-\phi)$ are rational if and only if $\phi = \pi/3$. So, since $\cos(\pi/3)=1/2 \not= 
1/3 = \cos(\theta)$, we see that $\theta$ is irrational. \end{proof}

The answer to Mycielski's half-century-old query about tumbling {\sf T} is now obvious from \ref{Main} with \ref{Niven}.

\begin{cor}\label{Tetrahed} Let {\sf T} be a regular tetrahedron in $\Re^3$, and let ${\cal W := W}({\rm R}_{\sf T})$ be the peripatetic rotational isometry group determined by the set ${\rm R}_{\sf T}$ of six 
generating rotations $r_{\sf E}$ around the edges {\sf E} of {\sf T}, where each {\rm Size}$(r_{\sf E})$ is the supplement of the dihedral angle of {\sf T}. Then the orbit $P{\cal W}$ is dense in $\Re^3$ for each $P\in\Re^3$.   \end{cor}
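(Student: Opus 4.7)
The plan is to reduce Corollary \ref{Tetrahed} to Theorem \ref{Main} by restricting attention to just two of the six edge rotations in $\mathrm{R}_{\mathsf{T}}$, chosen so that their axes are skew. In a regular tetrahedron with vertices $A,B,C,D$, the pair of opposite edges $AB$ and $CD$ share no vertex; since the four vertices are not coplanar, the lines carrying $AB$ and $CD$ are neither parallel nor intersecting, hence skew. Write $r_1 := r_{AB}$ and $r_2 := r_{CD}$ for the two corresponding generators in $\mathrm{R}_{\mathsf{T}}$, and set $\mathrm{R}' := \{r_1,r_2\}$.

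Next I would dispose of the Quiz. With unit edge length, let $M$ be the midpoint of $AB$; then $\angle CMD$ equals the dihedral angle $\theta$ along $AB$, with $CM = DM = \sqrt{3}/2$ (the height of an equilateral triangle of side $1$) and $CD = 1$. The law of cosines gives $1 = \tfrac{3}{4} + \tfrac{3}{4} - 2\cdot\tfrac{3}{4}\cos\theta$, whence $\cos\theta = 1/3$ and $\sin\theta = 2\sqrt{2}/3$. Lemma \ref{Niven} then yields that $\theta$ is irrational in the paper's sense, i.e.\ $\theta/\pi \notin \mathbb{Q}$.

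Now $\mathrm{Size}(r_{\mathsf{E}}) = \pi - \theta$, so $\mathrm{Rad}(r_{\mathsf{E}})/\pi = \pm(1 - \theta/\pi)$ is also irrational. By Corollary \ref{Circle}, both $r_1$ and $r_2$ are therefore of infinite order. All hypotheses of Theorem \ref{Main} hold for $\mathrm{R}'$, so the orbit $P\mathcal{W}(\mathrm{R}')$ is dense in $\mathbb{R}^3$ for every $P \in \mathbb{R}^3$.

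Finally I would note the trivial inclusion $P\mathcal{W}(\mathrm{R}') \subseteq P\mathcal{W}(\mathrm{R}_{\mathsf{T}})$ and conclude density of the larger orbit. The one place meriting care — and really the only conceptual obstacle — is justifying this inclusion for peripatetic groups, since $\star$ is not the ordinary composition. But the recursion $P(f\star g_i) := (Pf)(fg_i)$ depends only on the isometry $f$ and on the single generator $g_i$ (via its transported axis $\mathsf{A}_i f$), not on the ambient set of generators. Consequently, every $\star$-product of elements of $\mathrm{R}'$ evaluates to the same isometry of $\mathbb{R}^3$ whether computed inside $\mathcal{W}(\mathrm{R}')$ or inside $\mathcal{W}(\mathrm{R}_{\mathsf{T}})$, and the containment of orbits follows immediately. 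Everything else is the elementary dihedral-angle computation already outlined.
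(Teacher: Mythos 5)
Your proposal is correct and follows the paper's own route exactly: the paper deduces Corollary \ref{Tetrahed} ``from \ref{Main} with \ref{Niven}'' by using two of the six edge rotations whose axes are skew (a pair of opposite edges) and the irrationality of the dihedral angle $\theta$ with $\cos\theta=1/3$. You merely fill in details the paper leaves implicit --- the law-of-cosines computation from the Quiz and the orbit inclusion $P{\cal W}({\rm R}')\subseteq P{\cal W}({\rm R}_{\sf T})$ --- both of which are handled correctly.
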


Steve Silverman asks whether there exists a tetrahedron, all six of whose dihedral angles are rational. He provides an example in which four of the six are rational. Is four best possible? If ``yes'', then the tumblings 
of an arbitrary tetrahedron {\sf K}, each edge {\sf E} of which is assigned a rotation $r_{\sf E}$ whose size is the supplement of the dihedral angle at {\sf E}, will trace out an orbit $P{\cal W}$ that is dense in 
$\Re^3$ for each $P\in\Re^3$.

\section{Orbital density for stationary groups}

The following may be a duplication of a decades-old unpublished result of Jan Mycielski.

\begin{thm}\label{StaDense} Let $d\in\{2,3\}$. Let the generating set {\rm R} be as in Theorem \ref{AllanS} or \ref{Main}, and let both of the rotations $r_1$ and $r_2$ be of infinite order. Let 
${\cal S:=S}${\rm(R)} be the stationary group determined  {\rm R}. Then the orbit $V{\cal S}$ is dense in $\Re^d$ for every $V\in\Re^d$. \end{thm}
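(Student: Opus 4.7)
The plan is to pass to the closure $K:=\overline{V\mathcal{S}}$ and to show that $K$, being invariant under the continuous enlargement of the cyclic generators, must equal $\Re^d$.

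First, $K$ is closed and $\mathcal{S}$-invariant by continuity of the isometric action. Because each $r_i$ has infinite order, Corollary~\ref{Circle} makes $\mathcal{G}_i$ dense in the full one-parameter rotation group $\Gamma_i$ about $\mathsf{X}_i$ (about $U_i$ when $d=2$). Hence every element of $\Gamma_i$ is a limit of powers of $r_i$ and so carries $K$ into itself. Setting $\mathcal{H}:=\langle\Gamma_1,\Gamma_2\rangle$, we obtain $V\mathcal{H}\subseteq K$, so it suffices to show $V\mathcal{H}$ is dense in $\Re^d$.

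To analyze $\mathcal{H}$ I would use the surjection $\pi:\mathrm{Isom}^+(\Re^d)\twoheadrightarrow SO(d)$ onto the rotational part, whose kernel is the translation subgroup $\Re^d$. The image $\pi(\mathcal{H})$ contains the rotational parts of all rotations in $\Gamma_1\cup\Gamma_2$: in the planar case this already exhausts $SO(2)$, and in the 3D case the axis-directions of $\mathsf{X}_1$ and $\mathsf{X}_2$ are non-parallel (skew lines are in particular non-parallel), so the two 1-parameter subgroups of $SO(3)$ they define generate all of $SO(3)$. Hence $\pi(\mathcal{H})=SO(d)$, and the subgroup $T:=\mathcal{H}\cap\Re^d$ of pure translations in $\mathcal{H}$ is invariant under conjugation by all of $SO(d)$; that is, $T$ is an $SO(d)$-invariant subgroup of $\Re^d$.

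The only closed $SO(d)$-invariant subgroups of $\Re^d$ are $\{0\}$ and $\Re^d$, so $\overline T\in\{\{0\},\Re^d\}$. If $\overline T=\{0\}$ then $T=\{0\}$ and $\mathcal{H}\cong SO(d)$ is a compact subgroup of $\mathrm{Isom}^+(\Re^d)$, hence has a common fixed point (by a standard averaging argument over Haar measure). That fixed point would have to lie on both $\mathsf{X}_1$ and $\mathsf{X}_2$, contradicting the skewness hypothesis in 3D (or $U_1\ne U_2$ in 2D). Thus $\overline T=\Re^d$, which makes $V+T$ dense in $\Re^d$; since $V+T\subseteq V\mathcal{H}\subseteq K$ and $K$ is closed, $K=\Re^d$ and we are done.

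The step I expect to need the most care on is establishing $T\ne\{0\}$. The abstract no-common-fixed-point argument above yields it for free, but the most transparent explicit witness is the composition $R_\pi^{(1)}\circ R_\pi^{(2)}$ of the two half-turns: in 2D this is translation by $2(U_2-U_1)\ne 0$, and in the 3D skew case it is a screw whose translation component is twice the common perpendicular vector joining $\mathsf{X}_1$ and $\mathsf{X}_2$, and hence is nonzero precisely because $\mathsf{X}_1\cap\mathsf{X}_2=\emptyset$.
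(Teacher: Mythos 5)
Your argument is correct in outline but takes a genuinely different route from the paper's. The paper proves density by an explicit construction: starting from $V$ it alternately applies bounded blocks of powers of $r_1$ and $r_2$ to build an increasing chain of finite subsets ${\sf O}_b\subseteq V{\cal S}$, and then asserts that the convex closures of the ${\sf O}_b$ grow without bound while their mesh tends to $0$; Corollary \ref{Circle} supplies density along each individual circle. You instead pass to the closed invariant set $K=\overline{V{\cal S}}$, enlarge each cyclic generator to the full circle group $\Gamma_i$ (again via Corollary \ref{Circle}), and then classify what the topological group ${\cal H}=\langle\Gamma_1,\Gamma_2\rangle$ can be by projecting to $SO(d)$ and invoking the $SO(d)$-invariance of the translation subgroup $T$. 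Your approach is more structural and easier to make airtight than the paper's (whose claims about the shapes and meshes of the ${\sf O}_b$ are asserted rather than proved), at the cost of leaning on standard but nontrivial facts: the classification of closed $SO(d)$-invariant subgroups of $\Re^d$, the generation of $SO(3)$ by two circle subgroups with non-parallel axes, and, for the fixed-point step, that a subgroup projecting isomorphically onto $SO(d)$ is compact --- a claim that is cleanest if you replace ${\cal H}$ by its closure, which is a closed (hence Lie) subgroup onto which the open mapping theorem applies.

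One concrete correction to your final remark: in $\Re^3$ the composition of the two half-turns about skew lines is a screw motion (rotation about the common perpendicular by twice the angle between the lines, plus translation along it), not a pure translation, so it is \emph{not} an element of $T={\cal H}\cap\Re^3$ and does not directly witness $T\ne\{0\}$. What it does witness is that ${\cal H}$ has unbounded orbits and hence no common fixed point, which is exactly the contradiction your dichotomy needs in the case $\overline{T}=\{0\}$; so the abstract branch of your argument survives, but the ``transparent explicit witness'' only works as stated in the planar case. If you want an explicit nonzero element of $T$ in $\Re^3$, take $\gamma\in\Gamma_1$ nontrivial and $h\in{\cal H}$ whose rotational part fixes the direction of ${\sf X}_1$ while $h$ moves the line ${\sf X}_1$ itself; then $(h^-\,\gamma\, h)\,\gamma^-$ is a nonzero translation.
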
 

\begin{proof} Since the proof for $d=3$ is essentially the same as that for $d=2$, we will argue only the $d=2$ case.   

The axes of $r_1$ and $r_2$ are $U_1$ and $U_2$ respectively. However, unlike in Theorem \ref{AllanS}, here the points $U_i$ are fixed, and are unaffected by actions of ${\cal S}$. On the other hand, 
the point $V$ is not fixed. Suppose without loss of generality that $V\not= U_2$; \ i.e., that the points $V$ and $U_2$ have different addresses in $\Re^2$. 

By hypothesis, for each $i\in\{1,2\}$ we have that Size$(r_i) = \rho_i\pi$ for an irrational $\rho_i$. We can take it that $\rho_i\in(0,1)$. Let $k_i$ be the positive integer for which $k_i\rho_i< 1 < (k_i+1)\rho_i$. 

Each set ${\sf C}^*_{i,V} := {\sf C}_{i,V}\cap V{\cal S}$ is dense in the unique circle ${\sf C}_{i,V}$ with centerpoint $U_i$ and with $V\in{\sf C}_i$, provided only that ${\sf C}_i\not=\{U_i\}$. Let ${\sf O}_1$ 
be the finite set  $\{Vr_1^j: 0\le j\le 2k_1\}\subset{\sf C}_{1,V}^*$. 

For each $A\in{\sf O}_1$, we create the finite set $\{Ar_2^j:0\le j\le 4k_2\}\subset {\sf C}_{2,A}^*$, where ${\sf C}_{2,A}$ is the unique circle having centerpoint $U_2$ and such that $A\in{\sf C}_{2,A}$.  
Define ${\sf O}_2 :=\bigcup\{\{Ar_2^j:0\le j\le 4k_2\}:A\in{\sf O}_1\}$. 

Similarly, using the points $A\in{\sf O}_2$, define ${\sf O}_3 := \bigcup\{\{Ar_1^j:0\le j\le 2^3k_1\}:A\in{\sf O}_2\}$. 

Next, define ${\sf O}_4 := \bigcup\{\{Ar_2^j:0\le j\le 2^4k_2\}:A\in{\sf O}_3\}$. Continue thus, back and forth between the fixed rotational axes $U_1$ and $U_2$. This recursion produces an infinite sequence 
$\langle{\sf O}_b\rangle_{b=1}^\infty$ of finite subsets of $V{\cal S}$ with ${\sf O}_b\subset{\sf O}_{b+1}$, where eventually the convex closures of the ${\sf O}_b$ approximate elliptical regions whose minimal diameters 
increase without bound, and such that the meshes of the sets ${\sf O}_b$ diminish to zero.\footnote{It is natural to define the mesh of a subset ${\sf E}\subseteq V{\cal S}$ as the minimum $\epsilon>0$ for which no 
disc ${\sf D}$ of radius $\epsilon$ inside the convex closure of {\sf E} can fail to contain points in {\sf E}. }  Ultimately we obtain a denumerable subset $\bigcup\{{\sf O}_b:b\in{\mathbb N}\}\subseteq V{\cal S}$, 
where $\bigcup\{{\sf O}_b:b\in{\mathbb N}\}$ is dense in $\Re^2$. Therefore, the orbit $V{\cal S}$ of $V$ under the actions of the stationary rotational isometry group ${\cal S}$(R) is itself dense in $\Re^2$. \vspace{.5em} 

\noindent The three-dimensional case involves a reiteration of the argument above, modulo these five observations:

(1). The axes of rotation are now two fixed directed skew lines instead of two fixed points. 

(2). The circle ${\sf C}_{1,V}$ is not coplanar with the circle ${\sf C}_{2,V}$.

(3). For $d=3$, the convex closures of the ${\sf O}_b$, approximate ellipsoids or spheroids rather than ellipses. 

(4). Re ``mesh'': In $\Re^3$ we talk of $\epsilon$-radius balls instead of the $\epsilon$-radius discs that make sense in $\Re^2$. 

(5). In $\Re^2$ the convex closure of $V{\cal S}$ is two-dimensional. But it is three-dimensional in $\Re^3$.  \end{proof}

\section{Related issues}

The classes of stationary and peripatetic rotational groups in $\Re^d$ for $2\le d<\infty$ are antipodal in regards to the modes of their rotational generators $r_i$. This suggests the possible fruitfulness of studying 
rotational groups of sorts that lie between those antipodal classes; i.e., rotational groups some of whose given axes are stationary while others are peripatetic.\vspace{1em}

We believe Theorem \ref{Main} extends in a natural way to $\Re^d$ for $d\in\{4,5,6\ldots\}$, given an appropriate finite collection ${\cal X}$ of lines ${\sf X}_i\subseteq\Re^d$ and of infinite-order rotations $r_i$ 
of 2-planes about those lines. Our readers will expect that the claim in such a generalization of Theorem \ref{Main} would be that the orbit $P{\cal W}$ is dense in $\Re^d$ for every $P\in \Re^d$.

We ask this question of our readers: Which, if any, of the following three hypotheses about the collection ${\cal X}$ is both necessary and sufficient to guarantee the density in $\Re^d$ of $P{\cal W}$ for all $P$?

{\bf 1.} \ $|{\cal X}|=d-1$ and no proper hyperplane in $\Re^d$ is a superset of $\bigcup{\cal X}$. 

{\bf 2.} \ $|{\cal X}|=d-1$ and ${\cal X}$ is pairwise skew. 

{\bf 3.} \ ${\cal X}=\{{\sf X}_1,{\sf X}_2\}$ is a set of skew lines.\vspace{.5em} 

To illuminate the contention between the hypotheses, {\bf 1} and {\bf 2}, we offer for $4\le d\in{\mathbb N}$ this example:

For each $i\in{\mathbb N}$, let ${\sf Y}_i:=\{\langle i,ti,ti^2,0,0,\ldots 0\rangle:t\in\Re\}\subseteq\Re^d$. Then ${\cal Y}:=\{{\sf Y}_i:i\in{\mathbb N}\}$ is an infinite pairwise skew set of lines for which $\bigcup{\cal Y}$ 
is a subset of a copy of  $\Re^3$ that occurs as a proper subspace of $\Re^d$.\vspace{1em} 

Both Theorem \ref{Main} and Corollary \ref{Tetrahed} provide orbits $P{\cal W}$, each of which is dense in $\Re^3$. But the orbit encountered in \ref{Tetrahed} is plainly richer than that encountered in \ref{Main}. This 
invites an observation. 

Let the regular tetrahedron {\sf T} have an edge length of $\sqrt{6}$, let $P$ be its barycenter, let ${\cal G}$ be the peripatetic rotational isometry group generated by the six edge rotations $r_{UV}$ of {\sf T}, let 
$f\in{\cal G}$, and let the axes of rotations $r_{AB}$ and $r_{AC}$ share a single vertex, $A$. Then ${\sf H}\cap P{\cal G}$ contains the vertices of a tiling of {\sf H} by hexagons of edge length $1$, where {\sf H} is the 
plane determined by the point set $\{Pf,Pfr_{AB},Pfr_{AC}\}$.\vspace{.5em} 

\noindent{\bf Conjecture 1.} If plane ${\sf H}\supseteq\{Pf,Pfr_{AB},Pfr_{AC}\}$ then ${\sf H}\cap P{\cal G}$ is the vertex set of a hexagonal tiling of {\sf H}.\vspace{1em}

Let {\sf X} be the $x$-axis of $\Re^3$. Let {\sf Y} be parallel to the $y$-axis and through $\langle 0,0,1\rangle$.  Let ${\cal W := W}$(R) be the peripatetic group generated by ${\rm R} :=\{r_{\sf X},r_{\sf Y}\}$, 
 where Order$(r_{\sf X})=$ Order$(r_{\sf Y})=4$.  Let $P\in\Re^3\setminus({\sf X\cup Y})$. Then $P{\cal W}$ is an infinite discrete set of vertices of rectangular parallelopipeds, and $P{\cal W}$  is nowhere dense.

This suggests a shift of focus from rotations to their axes of rotation.\vspace{.5em}

For $d\ge 2$, say that nonparallel lines {\sf X} and {\sf Y} in $\Re^d$ {\em conform rationally} iff some translate of {\sf Y} intersects {\sf X} to form a rational angle.   
 
Let ${\sf X}_1$ and ${\sf X}_2$ be skew lines in $\Re^3$, and let both of their respective rotations $r_1$ and $r_2$ be of finite order. Let ${\cal F}$ be the peripatetic rotational isometry group generated by 
F $:=\{r_1,r_2\}$. \vspace{.5em}

\noindent{\bf Conjecture 2.} \ $P{\cal F}$ is nowhere dense for each $P\in\Re^3$ if and only if ${\sf X}_1$ conforms rationally to ${\sf X}_2$.\vspace{1em} 

\noindent{\bf Conjecture 3.} \ If ${\sf X}_1$ does not conform rationally to ${\sf X}_2$, then $P{\cal F}$ is dense in $\Re^3$ for all $P\in\Re^3\setminus({\sf X}_1\cup{\sf X}_2)$. \vspace{1em}

Suppose Conjecture 2 is true. Let ${\sf X}_1$ and ${\sf X}_2$ be rationally conforming skew lines in $\Re^3$, and let both $r_1$ and $r_2$ be rotations of finite order on those respective axes. Let $F := 
\{r_1,r_2\}$ be the generator set for the peripatetic group ${\cal F:=F}$(F).\vspace{.5em}

\noindent We call a polyhedron ${\sf K}\subseteq\Re^3$ a $\langle P,$F$\rangle$-{\em chamber} iff {\sf K} is maximal with respect to these four conditions:\vspace{.3em} 

(1). \ {\sf K} is convex and of finite diameter.

(2). \ Every vertex of {\sf K} is an element in $P{\cal F}$. 

(3). \ No element in $P{\cal F}$ is in the interior of {\sf K}.

(4). \ No three vertices of {\sf K} are collinear.\vspace{.3em}

\noindent What can one say about these $\langle P,$F$\rangle$-chambers? 




\vspace{3em} 

\noindent{\bf\Large Special language:} arc-rational, conform rationally, peripatetic, stationary. 

\end{document}